\numberwithin{equation}{section}
\newtheorem{cor}[equation]{Corollary}
\newtheorem{lemma}[equation]{Lemma}
\newtheorem{prop}[equation]{Proposition}
\newtheorem{theorem}[equation]{Theorem}
\theoremstyle{definition}
\newtheorem{defn}[equation]{Definition}
\newtheorem{rem}[equation]{Remark}
\def\IN{\mathbb N}
\def\IR{\mathbb R}
\def\IZ{\mathbb Z}
\def\eps{\varepsilon}
\newcommand{\divergence}{\operatorname{div}}
\newcommand{\supp}{\operatorname{supp}}
\newcommand{\vol}{\operatorname{vol}}
\title[Extremal metrics]{Extremal metrics for Laplace eigenvalues in perturbed conformal classes on products}
\author{Henrik Matthiesen}
\address
{Max Planck Institute for Mathematics,
Vivatsgasse 7, 53111 Bonn.}
\email{hematt\@@mpim-bonn.mpg.de}
\date{\today}
\subjclass[2010]{35J05, 35J70, 58J05}
\keywords{Laplace eigenvalues, Extremal metrics, Eigenmaps}
\thanks{\emph{Acknowledgments.}
It is a pleasure
to thank my advisor Werner Ballmann for permanent support.
I gratefully acknowledge the support and hospitality of
the Max Planck Institute for Mathematics in Bonn.
}
\begin{document}

\maketitle

\begin{abstract}
In this short note, we prove that conformal classes which are small perturbations of a product conformal class on a product with a standard sphere admit a metric extremal for
some Laplace eigenvalue.
As part of the arguments we obtain perturbed harmonic maps with constant density.

\end{abstract}

\section{Introduction}

For a closed manifold $M$ we are interested in the eigenvalues of the Laplace operator considered as functionals of the metric.

We denote by
\begin{align*}
	\mathcal{R}:=\{g \ : \ g \ \text{is a Riemannian metric on}\ M  \text{ with}\ \vol(M,g)=1\}
\end{align*}
the space of all unit volume Riemannian metrics on $M$
endowed with the $C^\infty$-topology, i.e.\ the smallest topology containing any $C^k$-topology.
The group $C^\infty_+(M)$ of positive smooth functions acts via (normalized) pointwise multiplication on $\mathcal{R},$
\begin{equation}
\phi. g := \vol(M,\phi g)^{-2/n} \phi g,
\end{equation}
so that $\vol(M,\phi.g)=1.$
The quotient space
\begin{align*}
\mathcal{C}= C^\infty_+(M) \backslash \mathcal{R}
\end{align*} 
 is the space of all conformal structures on $M.$

Since $M$ is compact, the spectrum of $\Delta_g$ consists of eigenvalues of finite multiplicity only for any $g \in \mathcal{R}.$
We list these as
\begin{equation}
0=\lambda_0<\lambda_1\leq\lambda_2\leq \dots,
\end{equation}
where we repeat an eigenvalue as often as its multiplicity requires.

In recent years there has been much interest in finding extremal metrics for eigenvalues $\lambda_k$ considered either as functionals 
\begin{equation} \label{critical_1}
\lambda_k \colon \mathcal{R} \to \IR
\end{equation}
or 
\begin{equation} \label{critical_2}
\lambda_k \colon [g] \to \IR,
\end{equation}
where 
\begin{equation*}
[g]=\{\phi g \ :\ \phi \in C_+^\infty(M)\}
\end{equation*}
denotes the conformal class of a metric g,
see for instance  \cite{ElSoufi_Giacomini_Jazar, Fraser_Schoen, Kokarev, Nadirashvili_1996, Petrides_2}, and references therein.
These functionals will not be smooth but only Lipschitz, therefore extremality has to be defined in an appropriate way, see below.

One reason to study these extremal metrics is their intimate connection to other classical
objects from differential geometry.
For \eqref{critical_1}, these are minimal surfaces in spheres, and for \eqref{critical_2} these are sphere-valued harmonic maps with constant density, so called \emph{eigenmaps}.
There has been a lot of effort in the past to understand, which manifolds admit eigenmaps
or even minimal isometric immersions into spheres, see for instance \cite[Chap. 6]{Urakawa} for a general overview over classical results for eigenmaps including the generalized Do Carmo--Wallach theorem, and \cite{Bryant, Lawson} to mention only the two most classical results.

Before we state our results, we have to introduce some notation.
Let $M$ be a smooth, closed manifold.

A smooth map $u \colon M \to S^\ell$ is called an \emph{eigenmap}, if it is harmonic, i.e.
\begin{equation} \label{eigen_eqn}
\Delta u = |\nabla u|^2 u,
\end{equation}
and has constant density $|\nabla u|^2=const.$
In other words, the components of $u$ are all eigenfunctions corresponding to 
the same eigenvalue.
Note that most Riemannian manifolds do not admit eigenmaps, since the spectrum
is generically simple by \cite[Theorem 8]{Uhlenbeck}.
Even more, the spectrum of a generic metric in a conformal class is simple \cite{BW, GLSD,Uhlenbeck}.
Moreover, we would like to point out that it is not clear at all whether  eigenmaps exist in the presence of large multiplicty.

\begin{theorem} \label{main}
Let $(M,g)$ be a closed Riemannian manifold of dimension $\dim(M) \geq 3$, and assume
\begin{itemize}
\item[(i)] There is a a non-constant eigenmap $u \colon (M,g) \to S^1,$
\\
or
\item[(ii)] $(M,g)=(N \times S^\ell, g_N+ g_{st.}),$ where $g_{st.}$ denotes the round metric of curvature $1$ on $S^\ell.$
\end{itemize}
Then there is a neighbourhood $U$ of $[g]$ in $\mathcal{C},$ such that for any
$c \in U,$ there is a representative $h \in c,$ such that
$(M,h)$ admits a non-constant eigenmap to $S^1$ respectively $S^\ell.$
\end{theorem}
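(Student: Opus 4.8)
\emph{Overall strategy.} The plan is to deduce both statements from the implicit function theorem, perturbing off the given solution and using the conformal factor of a representative as an auxiliary unknown; the hypothesis $\dim M\geq 3$ enters exactly once but decisively, because under a conformal change $g\mapsto e^{2\vf}g$ the codifferential on $1$-forms, and the tension field of a map, acquire a term proportional to $n-2$, so for $n\geq 3$ the conformal factor is an effective extra parameter. I would work throughout in Hölder spaces and recover smoothness by elliptic regularity at the end. Since $\vol(M,e^{2\vf}\bar g)^{-2/n}e^{2\vf}\bar g$ lies in the same conformal class as $\bar g$, it suffices to produce, for every metric $\bar g$ near $g$ in $\mathcal R$, a function $\vf$ near $0$ making $e^{2\vf}\bar g$ carry an eigenmap; the classes $[\bar g]$ then fill a neighbourhood of $[g]$ because $\mathcal R\to\mathcal C$ is open. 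I would first record the two relevant descriptions: a non-constant eigenmap $M\to S^1\subset\IC$ is the same datum as a harmonic $1$-form $\omega$ with $[\omega]\in H^1(M;2\pi\IZ)$ and $|\omega|^2$ constant (write the map as $e^{i\vf}$, $d\vf=\omega$; harmonicity of the map is $\Delta\vf=0$); and a non-constant eigenmap $M\to S^l\subset\IR^{l+1}$ is a harmonic map with $|\nabla u|^2$ constant, since differentiating $|u|^2\equiv1$ gives $\la\Delta u,u\ra=|\nabla u|^2$, so for maps into $S^l$ the harmonic map equation is exactly \eqref{eigen_eqn} and the only extra condition is constancy of $|\nabla u|^2$.

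\emph{Case (i).} Fix $a:=[\omega]\in H^1(M;2\pi\IZ)$; for any metric $h$ let $\omega_h\in a$ be the unique $h$-harmonic representative, which depends smoothly on $h$. Write $G(\bar g,\vf):=|\omega_{e^{2\vf}\bar g}|^2_{e^{2\vf}\bar g}$ and seek $\vf$ with $G(\bar g,\vf)$ constant; by hypothesis $G(g,0)=|\omega|^2_g$ is constant. From the Euler--Lagrange equation $\delta_g\bigl(e^{(n-2)\vf}\omega_{e^{2\vf}g}\bigr)=0$ of the $L^2$-minimiser one finds, differentiating at $\vf=0$ in the direction $\psi$, that $\omega_{e^{2t\psi}g}=\omega+td\alpha+o(t)$ with $\Delta_g\alpha=(n-2)\la d\psi,\omega\ra$ (solvable since $\omega$ is coclosed); hence the $\vf$-linearisation of $G$ at $(g,0)$ is $\psi\mapsto-2C\psi+2\la\omega,d\alpha\ra$, with $C:=|\omega|^2_g>0$. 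Restricted to functions of mean zero this is $-2C\cdot\id$ plus a classical pseudodifferential operator of order zero whose symbol, by Cauchy--Schwarz and $|\omega|^2_g=C$, lies in $[-2(n-2)C,0]$, so the total symbol is everywhere $\leq-2C<0$: the operator is Fredholm of index zero. It is injective: if $-2C\psi+2\la\omega,d\alpha\ra=0$ then $\la\omega,d\alpha\ra=C\psi$, and pairing $\Delta_g\alpha=(n-2)\la d\psi,\omega\ra$ with $\alpha$, integrating by parts using $\delta_g\omega=0$, and substituting gives $\int_M|d\alpha|^2+(n-2)C\int_M\psi^2=0$, so $\psi=0$. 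Thus the linearisation is an isomorphism; the implicit function theorem provides $\vf(\bar g)$ near $0$ with $G(\bar g,\vf(\bar g))$ constant, elliptic regularity makes it smooth, and $h:=e^{2\vf(\bar g)}\bar g$ has $|\omega_h|^2_h$ constant with $[\omega_h]=a$, so the $S^1$-valued map with exterior derivative $\omega_h$ (well defined since $[\omega_h]\in H^1(M;2\pi\IZ)$) is a non-constant eigenmap ($a\neq0$ because the original map was non-constant).

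\emph{Case (ii).} Here $u_0\colon N\times S^l\to S^l$, $(x,p)\mapsto p$, is an eigenmap, $\Delta_gu_0=l\,u_0$, $|\nabla u_0|^2\equiv l$. For $(\bar g,\vf,v)$ near $(g,0,u_0)$ with $v$ a map into $S^l$ and $h:=e^{2\vf}\bar g$, consider the system ``$\Delta_hv=|\nabla_hv|^2_hv$'' and ``$|\nabla_hv|^2_h$ constant'', whose solutions are precisely eigenmaps. The first equation is $SO(l+1)$-equivariant, so I would restrict $v$ to a slice transverse to $SO(l+1)\cdot u_0$, replace that equation by its $L^2$-projection orthogonal to $\mathfrak{so}(l+1)\cdot u_0$, and afterwards recover the genuine equation from the conservation law $\int_M\la\tau_h(v),Xv\ra\,dV_h=0$ for $X\in\mathfrak{so}(l+1)$ (valid for any map, $X$ being Killing on $S^l$) together with non-degeneracy of the Gram matrix $\bigl(\int\la Xu_0,Yu_0\ra\,dV_g\bigr)$. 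The $(\vf,v)$-linearisation of the projected system at $(g,0,u_0)$ is $\bigl(J_{u_0}V-(n-2)\,du_0(\grad_g\psi)\,,\ -2l\psi+2\la\nabla u_0,\nabla V\ra\bigr)$, where $J_{u_0}=\bar\Delta^{u_0^*TS^l}-(l-1)$ is the Jacobi operator; the $\psi$-block $-2l\cdot\id$ of the second line is invertible, and eliminating $\psi$ (a Schur-complement reduction) leaves the single operator $S:=J_{u_0}+\tfrac{n-2}{l}\,B^*B$ on the slice, where $BV:=\la\nabla_g u_0,\nabla_g V\ra$ and $B^*f=-du_0(\grad_g f)$. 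On the product, $J_{u_0}=\Delta_N\otimes1+1\otimes J_{\id_{S^l}}$ and $B=1\otimes B_{S^l}$; now $J_{\id_{S^l}}$ is nonnegative except on the $(l+1)$-dimensional span of the $\grad_{S^l}y_i$, where it equals $(2-l)\cdot\id$, and vanishes exactly on Killing fields, while $B_{S^l}^*B_{S^l}$ vanishes on Killing fields and equals $l\cdot\id$ on the span of the $\grad_{S^l}y_i$ (this is where $\Delta_{S^l}y_i=l\,y_i$ enters). Consequently $S$ acts as $(\mu+\dim N)\id$ on $E_\mu(\Delta_N)\otimes\mathrm{span}\{\grad_{S^l}y_i\}$, as $\mu\cdot\id$ on $E_\mu(\Delta_N)\otimes(\text{Killing})$, and satisfies $S\geq J_{u_0}>0$ on the $J_{\id_{S^l}}$-positive part; the only null mode is $E_0(\Delta_N)\otimes(\text{Killing})=\mathfrak{so}(l+1)\cdot u_0$, which the slice removes. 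Hence $S$ is positive definite on the slice provided $\dim N\geq1$, which holds in case (ii) unless $M=S^l$. (For $M=S^l$ one must additionally quotient by the conformal group of the target, which acts on domain conformal classes by pull-back and preserves the solution set.) With $S$ invertible the implicit function theorem yields, for $\bar g$ near $g$, a pair $(\vf(\bar g),v(\bar g))$ solving the system, i.e.\ $e^{2\vf(\bar g)}\bar g$ admits the non-constant eigenmap $v(\bar g)$ to $S^l$.

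\emph{Main obstacle.} I expect the real work to be the analysis packaged in the operator $S$ of case (ii): computing the spectrum of $J_{u_0}$ on the product precisely enough to locate all of its kernel (Killing fields, and the resonant sections $\grad_{S^l}y_i\otimes E_{l-2}(\Delta_N)$), and checking that the conformal correction $\tfrac{n-2}{l}B^*B$ repairs exactly those directions together with the unstable directions of $J_{u_0}$. This uses the roundness of $S^l$ twice — to make $J_{\id_{S^l}}$ and $B_{S^l}$ explicit — and relies essentially on $n-2\neq0$, i.e.\ $\dim M\geq3$. The remaining points — the slice and conservation-law bookkeeping, the passage from a neighbourhood of metrics to a neighbourhood of conformal classes, and the Hölder-to-$C^\infty$ bootstrap — are routine.
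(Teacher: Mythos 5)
Your proposal is correct in outline, but it proves the theorem by a genuinely different route from the paper. You linearize directly: keeping the conformal factor and the map (in case (i), the harmonic representative of the fixed class in $H^1(M;2\pi\IZ)$) as unknowns and invoking the implicit function theorem, the decisive computation being that the Schur complement $S=J_{u_0}+\tfrac{n-2}{l}B^*B$ is positive on the slice transverse to the rotation orbit — in particular that the conformal term repairs exactly the kernel and unstable directions of the Jacobi operator of the projection (the $\grad_{S^l}y_i$ modes, including their resonances with $E_{l-2}(\Delta_N)$), and your spectral bookkeeping for $J_{\id_{S^l}}$ and $B_{S^l}^*B_{S^l}$ checks out. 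The paper instead works with the conformally invariant $n$-energy ($n=\dim M$): it takes a minimizing $n$-harmonic map in the prescribed $l$-homotopy class (\cref{existence_2}), uses the uniform $C^{1,\alpha}$ estimates of \cref{regularity} and uniqueness of the minimizer — Veronelli's theorem for $S^1$, and \cref{prop_prod_est} (a degree/H\"older equality argument) for products — to conclude by a compactness argument that $n$-harmonic maps for nearby metrics still have nowhere vanishing differential, and then applies \cref{n-harmonic-eigenmap} to rescale conformally to an eigenmap. Your approach avoids the $n$-harmonic machinery and its regularity theory entirely and yields in addition local uniqueness and smooth dependence of the perturbed eigenmap on the conformal class; the paper's approach is softer, requires no spectral computation, and produces genuine energy minimizers, at the price of quoting existence, regularity and uniqueness results for $n$-harmonic maps. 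Two caveats on your write-up: the steps you label routine do carry real content — in (i) the Fredholm property of the order-zero operator on your chosen spaces (equivalently, Douglis--Nirenberg ellipticity of the coupled $(\psi,\alpha)$ system), in (ii) the slice/conservation-law bookkeeping and the $C^1$ dependence of the quasilinear system on the data, and in both cases the bootstrap to smoothness, which uses nonvanishing of the differential exactly as the paper's \cref{higher_regularity} does; and your parenthetical treatment of $\dim N=0$ (i.e.\ $M=S^l$) is only a sketch, since there the Möbius group genuinely enters — though the paper's \cref{prop_prod_est} likewise tacitly assumes $N$ positive-dimensional, as for $N$ a point uniqueness up to rotations fails.
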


An obvious question is then, whether the set of conformal structures admitting non-constant eigenmaps is
always non-empty.
We answer this at least in the following case.

\begin{cor} \label{mappingtori}
Assume $\phi \colon M \to S^1$ is a submersion.
Then the set $\mathcal{E} \subset \mathcal{C}$
of conformal structures admitting non-trivial eigenmaps to $S^1$
is open and non-empty.
\end{cor}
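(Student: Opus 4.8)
The plan is to get the openness of $\mathcal E$ essentially for free from \cref{main}, and to settle non-emptiness by an explicit construction on the mapping torus determined by $\phi$.

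\emph{Openness.} Suppose $c \in \mathcal E$ and pick a representative $g \in c$ carrying a non-constant eigenmap $(M,g) \to S^1$. When $\dim M \geq 3$ this says precisely that $(M,g)$ satisfies hypothesis (i) of \cref{main}, so the theorem supplies a neighbourhood $U$ of $[g] = c$ in $\mathcal C$, every member of which admits a representative with a non-constant eigenmap to $S^1$; that is, $U \subset \mathcal E$, so $\mathcal E$ is open. The cases $\dim M \leq 2$ are elementary: they are handled by uniformization (every conformal class on a torus or Klein bottle contains a flat metric, which visibly admits a non-constant eigenmap to $S^1$), and in fact give $\mathcal E = \mathcal C$.

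\emph{Non-emptiness: reduction.} I would use the standard dictionary: a smooth map $u \colon (M,g) \to S^1$ is a non-constant eigenmap exactly when $u^*(d\theta)$ is a harmonic $1$-form of constant norm that is not identically zero. Hence it suffices to exhibit a single Riemannian metric on $M$ admitting a non-trivial harmonic $1$-form of constant norm. Since $M$ is compact, $\phi$ is a proper submersion, hence a locally trivial fibre bundle over $S^1$ by Ehresmann's theorem. Thus $M$ is diffeomorphic to the mapping torus of some diffeomorphism $\psi$ of the fibre $F$: write $M_\psi = (F \times \IR)/{\sim}$ with $(x,t) \sim (\psi(x), t-1)$, so that $\phi$ corresponds to the projection $\pi \colon M_\psi \to \IR/\IZ$ and $\phi^*(d\theta)$ to a nonzero constant multiple of $dt$.

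\emph{The construction.} On $M_\psi$ I would look for a metric $g = dt^2 + g_t$ — meaning $\partial_t$ has unit length orthogonal to the fibres, on which $g$ restricts to $g_t$ — where $(g_t)_{t \in \IR}$ is a smooth family of metrics on $F$ with $g_{t+1} = \psi^* g_t$, which is exactly the condition for $g$ to descend to $M_\psi$. For any such $g$, the form $dt$ is closed with $|dt|_g^2 \equiv 1$ and equals a constant multiple of $\pi^*(d\theta)$; moreover $\divergence_g \partial_t$ is the logarithmic $t$-derivative of the volume density of $g_t$, so $dt$ is coclosed precisely when this density is independent of $t$. To produce such a family, fix a positive density $\mu_0$ on $F$ (if $F$ is disconnected, with total mass equalized over the components of each $\psi$-orbit). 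By Moser's theorem, applied componentwise, $\psi$ is isotopic to a $\mu_0$-preserving diffeomorphism $\psi'$; since mapping tori of isotopic diffeomorphisms are diffeomorphic, $M_{\psi'} \cong M$, so we may assume from the start that $\psi^* \mu_0 = \mu_0$. Pick any metric $g_0$ on $F$ with volume density $\mu_0$ and set $g_1 := \psi^* g_0$, which again has volume density $\mu_0$. The set of metrics on $F$ with prescribed volume density $\mu_0$ is path-connected — given two of them, interpolate linearly and rescale by the appropriate conformal factor to restore the density — so we may join $g_0$ to $g_1$ by such a path, make it $t$-constant near the endpoints, and extend it to all $t \in \IR$ by $g_{t+1} = \psi^* g_t$. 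Then $g = dt^2 + g_t$ is a smooth metric on $M$ for which $dt$, hence $\pi^*(d\theta)$, is a non-trivial harmonic $1$-form of constant norm; so $\pi \colon (M,g) \to S^1$ is a non-constant eigenmap and $[g] \in \mathcal E$.

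\emph{The main point.} Everything here is soft except one genuine constraint: coclosedness of $dt$ forces the fibre metrics $g_t$ to share a common volume density, and this cannot be arranged while retaining the prescribed monodromy $\psi$ — for instance when no power of $\psi$ preserves any volume density on $F$. Passing to an isotopic, volume-preserving model of $\psi$ via Moser's theorem is precisely what removes this obstruction; the remaining ingredients (path-connectedness of the space of metrics with fixed volume density, and the eigenmap / harmonic-$1$-form dictionary for $S^1$) are routine.
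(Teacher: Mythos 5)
Your proof is correct, and while the overall strategy mirrors the paper's (realize $M$ as the mapping torus of the monodromy, equip it with a bundle metric $dt^2 + g_t$, and observe that $\pi\colon M \to S^1$ is an eigenmap precisely when $dt$ is harmonic of constant norm, equivalently when the fibre volume density is $t$-independent), your non-emptiness argument diverges from the paper's at the crucial point and is in fact more careful. The paper simply says ``Choose a metric $g_0$ on $F$ which is invariant under $\phi$'' and takes $g_t \equiv g_0$; but a diffeomorphism admits an invariant Riemannian metric only if it generates a precompact subgroup of $\Diff(F)$, and this can fail for every representative of the isotopy class of the monodromy --- an Anosov diffeomorphism of $T^2$ (whose mapping torus is a Sol-manifold, so the corollary's hypothesis is satisfied) has infinite order in the mapping class group and is isotopic to no isometry of any metric. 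You correctly isolate that coclosedness of $dt$ only constrains the volume densities of the fibre metrics, not the metrics themselves, and you arrange a $\psi$-invariant density via Moser's theorem (with the componentwise caveat) and then interpolate $g_0$ to $\psi^*g_0$ through metrics of fixed density, which is always possible. This genuinely fills a gap in the paper's proof of non-emptiness. Your treatment of openness via \cref{main}(i), with the separate remark on $\dim M \le 2$, matches what the paper leaves implicit.
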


\begin{rem}
It is not clear, whether $\mathcal{E}$ is also closed.
This question is related to possible degenerations of $n$-harmonic maps, as it will become
clear from the proof.
\end{rem}

Not every manifold admits a submersion to $S^1.$
In fact, there are topological obstructions to the existence of such a map.

More precisely, since $S^1$ is a $K(\IZ,1)$, a submersion gives rise to a non-trivial element
in $H^1(M,\IZ).$
Moreover, the differentials of local lifts of the submersion to $\IR,$ give rise
to a globally defined nowhere vanishing $1$-form.
In particular, $M$ needs to have $\chi(M)=0.$

As mentioned above, the existence of an eigenmap $u\colon (M,h) \to S^\ell$ for a metric $h \in [g]$ implies
that $h$ is extremal for some of the functionals $\lambda_k$ on $[g]$.
Therefore, \cref{main} and \cref{mappingtori} have the following consequences for the existence of extremal metrics.

\begin{cor} \label{crit_1}
Under the assumptions of \cref{main}, there is a neighbourhood $U$ of $[g]$ in $\mathcal{C},$ such that
for any $c \in U,$ there is a representative $h \in c,$ such that $(M,h)$
is extremal for some eigenvalue functional on $c.$
\end{cor}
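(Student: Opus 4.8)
The statement is a corollary of Theorem \ref{main} together with the dictionary, recalled in the introduction, between eigenmaps and conformally extremal metrics, so my plan is to make that dictionary precise and then feed in Theorem \ref{main}. First I would fix the notion of extremality: a unit-volume metric $h$ is extremal for $\lambda_k$ restricted to its conformal class $c=[h]$ if for every analytic family $h_t=\phi_t.h$ of unit-volume metrics in $c$ with $h_0=h$ one has $\frac{d}{dt}\big|_{t=0^+}\lambda_k(h_t)\le 0$ and $\frac{d}{dt}\big|_{t=0^-}\lambda_k(h_t)\ge 0$. Next I would invoke the standard characterization (El Soufi--Ilias; see also \cite{Kokarev, Nadirashvili_1996}): since $\dim M\ge 3$, a metric $h$ is extremal for $\lambda_k|_c$ in this sense if and only if there exist eigenfunctions $f_1,\dots,f_d$ of $\Delta_h$, all associated to $\lambda_k(h)$, with $\sum_{i=1}^d f_i^2$ constant on $M$; equivalently, after rescaling, $f=(f_1,\dots,f_d)$ maps into a round sphere and is a non-constant eigenmap. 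This equivalence is obtained by writing out the first variation of the Rayleigh quotient along a conformal deformation and then optimizing the resulting quadratic form over the finite-dimensional $\lambda_k(h)$-eigenspace, the constancy of $\sum_i f_i^2$ being exactly the condition preventing $0$ from being separated from that eigenspace by the variation.

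Granting this, the argument is short. Given $c\in U$, where $U$ is the neighbourhood supplied by Theorem \ref{main}, that theorem produces a representative $h\in c$ together with a non-constant eigenmap $v\colon(M,h)\to S^m$, with $m=1$ in case (i) and $m=l$ in case (ii). By definition of eigenmap the components $v_0,\dots,v_m$ satisfy $\Delta_h v_j=|\nabla v|^2 v_j$ with $|\nabla v|^2\equiv\mu$ a constant which is positive because $v$ is non-constant, and $\sum_j v_j^2\equiv 1$. Thus $\mu$ is a positive eigenvalue of $\Delta_h$, so $\mu=\lambda_k(h)$ for some $k\ge 1$ (take the least such $k$), the functions $v_0,\dots,v_m$ all lie in the $\lambda_k(h)$-eigenspace, and their squares sum to the constant $1$. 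By the characterization above, $h$ is extremal for $\lambda_k$ restricted to $c$, which is the assertion.

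I do not expect a genuine obstacle beyond Theorem \ref{main} itself. The only point that really requires the cited equivalence, rather than a one-line computation, is that the presence of a \emph{single} eigenmap already forces the two-sided sign condition for \emph{all} conformal volume-preserving deformations, and not merely that $h$ solves the relevant Euler--Lagrange (eigenmap) equation; this two-sidedness is exactly what the min-max description of the one-sided derivatives of $\lambda_k$ over the eigenspace provides. Everything else -- identifying $\mu$ with some $\lambda_k(h)$ and reading the required eigenfunctions off the eigenmap -- is bookkeeping.
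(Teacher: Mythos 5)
Your proposal is correct and matches the paper's (implicit) argument: \cref{crit_1} is obtained exactly by combining \cref{main} with the El Soufi--Ilias characterization recorded as \cref{extremal}, the non-constant eigenmap supplied by \cref{main} furnishing $\lambda_k(h)$-eigenfunctions whose squares sum to a constant. The only caveat is that your recalled equivalence omits the spectral condition in \cref{extremal} (that $\lambda_{k-1}(h)<\lambda_k(h)$ or $\lambda_k(h)<\lambda_{k+1}(h)$), but your choice of the least $k$ with $\lambda_k(h)=|\nabla v|^2>0$ guarantees $\lambda_{k-1}(h)<\lambda_k(h)$, so the argument goes through as written.
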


\begin{cor} \label{crit_2}
Under the assumptions of \cref{mappingtori}, the set $\mathcal{E} \subset \mathcal{C}$
of conformal structures admitting extremal metrics for some eigenvalue functional on conformal classes
is open and non-empty.
\end{cor}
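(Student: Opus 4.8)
The plan is to deduce the corollary from \cref{mappingtori} through the equivalence, recalled in the introduction, between eigenmaps and metrics extremal for an eigenvalue functional on a fixed conformal class --- the same mechanism by which \cref{crit_1} follows from \cref{main}. Accordingly, the first step is to make that equivalence precise. Since $\lambda_k\colon\mathcal C\to\IR$ is merely Lipschitz, extremality at a unit volume metric $h$ must be read off from the one-sided derivatives of $\lambda_k$ along conformal paths through $h$; the classical characterisation (see the references in the introduction) is that $h$ is extremal for $\lambda_k$ on $[h]$ exactly when there are eigenfunctions $u_1,\dots,u_m$ of $\Delta_h$ for the eigenvalue $\lambda_k(h)$ with $\sum_i u_i^2\equiv c$ for some constant $c>0$. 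A one-line Bochner computation then closes the loop: from
\[
\Delta_h\Bigl(\sum_i u_i^2\Bigr)=2\sum_i\bigl(u_i\,\Delta_h u_i-|\nabla u_i|^2\bigr)=2\lambda_k(h)\,c-2\sum_i|\nabla u_i|^2
\]
and the vanishing of the left-hand side one obtains $\sum_i|\nabla u_i|^2\equiv\lambda_k(h)\,c$, so that $u:=c^{-1/2}(u_1,\dots,u_m)\colon(M,h)\to S^{m-1}$ satisfies $\Delta u=\lambda_k(h)\,u=|\nabla u|^2u$, i.e.\ is a non-constant eigenmap (non-constant because $\lambda_k(h)>0$). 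Conversely, the components of a non-constant eigenmap $(M,h)\to S^l$ are eigenfunctions for a common positive eigenvalue $\mu$ with sum of squares $\equiv1$, and since $\mu=\lambda_k(h)$ for a suitable index $k$, the metric $h$ is extremal for $\lambda_k$ on $[h]$. In summary, a conformal class carries a metric extremal for some eigenvalue functional if and only if it carries a non-constant eigenmap into some round sphere.

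Granting this, the corollary is immediate. By \cref{mappingtori}, the hypothesis that $M$ admits a submersion onto $S^1$ makes the set $\mathcal E$ of conformal classes with a non-constant eigenmap to $S^1$ open and non-empty, and by the equivalence each such class carries a metric extremal for some $\lambda_k$; this gives non-emptiness, while openness is \cref{main}(i) carried through the equivalence --- starting from any conformal class with an $S^1$-eigenmap, \cref{main}(i) produces a whole neighbourhood of conformal classes with representatives admitting $S^1$-eigenmaps, hence extremal metrics. (If the set in the statement is read as the full set of conformal classes admitting an extremal metric for some eigenvalue functional, the same argument still shows it is non-empty and contains the open non-empty set $\mathcal E$, the open neighbourhoods being produced around classes that admit an $S^1$-eigenmap.)

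I do not anticipate a genuine obstacle here. The only delicate ingredient is the first step --- the correct formulation of extremality for the non-smooth functional $\lambda_k$ on $\mathcal C$ and its reduction to the algebraic condition $\sum_i u_i^2\equiv\mathrm{const}$ --- and this is settled in the literature cited in the introduction; everything after that is the short computation above together with a direct appeal to \cref{mappingtori}. All of the substantive work is carried out in \cref{main} and \cref{mappingtori}.
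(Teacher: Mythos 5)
Your proof is correct and follows the paper's (implicit) route: the corollary is just \cref{mappingtori} translated through the El Soufi--Ilias equivalence between eigenmaps and conformally extremal metrics (\cref{extremal}), and you merely fill in the short Bochner computation that the paper delegates to the cited reference. One small imprecision worth flagging: \cref{extremal} also requires the spectral-gap condition $\lambda_{k-1}(h)<\lambda_k(h)$ or $\lambda_k(h)<\lambda_{k+1}(h)$, which you silently absorb into ``$\mu=\lambda_k(h)$ for a suitable index $k$''; this is harmless, since taking $k$ minimal with $\lambda_k(h)=\mu>0$ always supplies the gap, but it should be said.
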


The proof of \cref{main} is rather simple once the correct conformally invariant formulation of the assertion is found.

This is as follows. 
Let $n$ be the dimension of $M.$
Then a smooth map into a sphere is called \emph{$n$-harmonic}, if it is a critical point 
of the \emph{$n$-energy}
\begin{align*}
E_n[u]= \int_M |du|^n dV_g,
\end{align*}
which is a conformally invariant functional.
These are precisely the solutions of the equation
\begin{align} \label{n-harmonic-eqn}
-\divergence(|\nabla u|^{n-2} \nabla u)= |\nabla u|^n u.
\end{align}
From \eqref{eigen_eqn} and \eqref{n-harmonic-eqn} it is evident, that
an eigenmap defines an $n$-harmonic map, which has $\nabla u \neq 0$ everywhere.
The crucial observation is that also the converse holds up to changing the metric conformally, see \cref{n-harmonic-eigenmap}.

Therefore, we will be concerned with $n$-harmonic maps with nowhere vanishing derivative.

In order to deduce \cref{mappingtori} from \cref{main}, it suffices to find a single non-trivial eigenmap
$u \colon (M,g) \to S^1$ for some metric $g.$
This turns out to be very easy using that $M$ is a mapping torus.

In  \cref{tools} we discuss the necessary preliminaries on $n$-harmonic maps and Laplace eigenvalues.
\cref{proofs} contains the proofs.

\section{Preliminaries} \label{tools}

First, we explain the notion of extremal metrics and its connection to eigenmaps.

\subsection{Extremal metrics for eigenvalue functionals}

In presence of multiplicity, the functionals $\lambda_k$ are not differentiable, but only Lipschitz.
However, it turns out that for any analytic deformation, left and right derivatives exist.
Using this El Soufi--Ilias introduced a notion of extremal metrics for these functionals.

\begin{defn}[{\cite[Definition 4.1]{ElSoufi_Ilias}}] \label{def_extremal}
A metric $g$ is called \emph{extremal} for the functional $\lambda_k$ restricted to the conformal class
$[g]$ of $g,$ if for any analytic family of metrics $(g_t) \subset [g],$
with $g_0=g,$ and $\vol(M,g_0)=\vol(M,g_t),$ we have
\begin{align*}
\left. \frac{d}{dt} \right|_{t=0^-} \lambda_k(g_t) \cdot \left. \frac{d}{dt} \right|_{t=0+} \lambda_k(g_t) \leq 0.
\end{align*}
 \end{defn}

We have

\begin{theorem}[{\cite[Theorem 4.1]{ElSoufi_Ilias}}] \label{extremal}
The metric $g$ is extremal for some eigenvalue $\lambda_k$ on $[g]$ if and only if
there is a eigenmap $u \colon (M,g) \to S^\ell$ given by $\lambda_k(g)$-eigenfunctions and either
$\lambda_{k-1}(g)<\lambda_k(g)$, or $\lambda_k(g) < \lambda_{k+1}(g).$
\end{theorem}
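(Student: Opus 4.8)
The plan is to reduce both implications to a single convexity dichotomy for the quadratic form that governs the first variation of $\lambda_k$ within the conformal class. Parametrise a volume-normalised analytic family in $[g]$ through $g$ as $g_t=(1+t\psi+O(t^2)).g$; the normalisation forces $\int_M\psi\,dV_g=0$, and conversely every mean-zero $\psi\in C^\infty(M)$ arises this way. Along an analytic family the eigenvalue branches can be chosen analytically (Rellich--Kato), so the one-sided derivatives of $\lambda_k(g_t)$ at $t=0$ exist, and computing the variation of the Rayleigh quotient identifies them with eigenvalues of the symmetric bilinear form
\[
Q_\psi(f)=\int_M\psi\,b_f\,dV_g,\qquad b_f=\tfrac{n-2}{2}|\nabla f|^2-\tfrac{n}{2}\lambda_k f^2=-\lambda_k f^2-\tfrac{n-2}{4}\Delta(f^2),
\]
restricted to the eigenspace $E_k=\ker(\Delta_g-\lambda_k)$ (the second expression uses $|\nabla f|^2=\lambda_k f^2-\tfrac12\Delta(f^2)$). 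More precisely, if $\lambda_k$ occupies the positions $j,\dots,j+m-1$ and $k=j+r$, then $\left.\tfrac{d}{dt}\right|_{0^+}\lambda_k(g_t)$ is the $(r{+}1)$-st smallest eigenvalue of $Q_\psi|_{E_k}$ and $\left.\tfrac{d}{dt}\right|_{0^-}\lambda_k(g_t)$ the $(r{+}1)$-st largest; for edge indices ($r=0$ or $r=m-1$) these are just the smallest and largest, in one order or the other. Setting up this variational formula carefully, including the perturbation theory behind it, is the step I expect to be the most delicate, although it is standard.

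The engine of the proof is the following remark: if $f_1,\dots,f_p\in E_k$ and $t_i>0$ satisfy $\sum_i t_i b_{f_i}=\text{const}$, then $U:=\sum_i t_i f_i^2$ is constant, and then so is $\sum_i t_i|\nabla f_i|^2$. Indeed $\sum_i t_i b_{f_i}=-\lambda_k U-\tfrac{n-2}{4}\Delta U$, so constancy gives $\Delta U=-\tfrac{4\lambda_k}{n-2}(U-\overline U)$ with $\overline U$ the mean of $U$; since $-\tfrac{4\lambda_k}{n-2}<0$ is not in the spectrum of $\Delta_g$ (for $n\ge 3$; for $n=2$ the gradient term in $b_f$ is absent and the conclusion is immediate), $U\equiv\overline U$, and then $\sum_i t_i|\nabla f_i|^2=\lambda_k\overline U$ from the displayed identity. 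Hence $\overline U^{-1/2}(\sqrt{t_1}f_1,\dots,\sqrt{t_p}f_p)$ maps into $S^{p-1}$ with $|u|^2\equiv 1$ and $|\nabla u|^2\equiv\lambda_k$, so $\Delta u=\lambda_k u=|\nabla u|^2u$: an eigenmap by $\lambda_k$-eigenfunctions, non-constant when $k\ge 1$.

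Given this, the direction ``eigenmap $\Rightarrow$ extremal'' is short. If $u=(u_1,\dots,u_{l+1})$ is an eigenmap by $\lambda_k$-eigenfunctions, then $\sum_i b_{u_i}=\tfrac{n-2}{2}|\nabla u|^2-\tfrac{n}{2}\lambda_k|u|^2=-\lambda_k$, so $\sum_i Q_\psi(u_i)=-\lambda_k\int_M\psi\,dV_g=0$ for every admissible $\psi$. As $u$ is non-constant, not all $u_i$ vanish, so $Q_\psi$ restricted to $\operatorname{span}\{u_i\}\subseteq E_k$ can be neither positive nor negative definite; hence its smallest and largest eigenvalues on $E_k$ satisfy $\mu_{\min}(\psi)\le 0\le\mu_{\max}(\psi)$. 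When $\lambda_{k-1}<\lambda_k$ or $\lambda_k<\lambda_{k+1}$, the index $k$ sits at an edge of its cluster, so the product of the one-sided derivatives of $\lambda_k(g_t)$ equals $\mu_{\min}(\psi)\,\mu_{\max}(\psi)\le 0$, and $g$ is extremal for $\lambda_k$.

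For ``extremal $\Rightarrow$ eigenmap'', extremality of $\lambda_k$ forces $Q_\psi|_{E_k}$ to be sign-indefinite for every mean-zero $\psi$: a definite form would make both the $(r{+}1)$-st smallest and the $(r{+}1)$-st largest eigenvalue strictly positive (resp. strictly negative), contradicting that their product is $\le 0$. Equivalently, no mean-zero $\psi$ satisfies $\int_M\psi\,b_f\,dV_g>0$ for all unit $f\in E_k$. Now $\{b_f:f\in E_k,\ \|f\|_{L^2}=1\}$, viewed modulo constant functions inside $L^2(M)$, is compact (the $L^2$-unit sphere of the finite-dimensional space $E_k$ is compact, its elements are smooth by elliptic regularity, and $f\mapsto b_f$ is continuous), so its convex hull is compact, hence closed. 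By Hahn--Banach separation, $0$ must lie in this convex hull: otherwise a separating functional, normalised to be mean-zero and then smoothed, would be a forbidden $\psi$. Thus $\sum_i t_i b_{f_i}=\text{const}$ for suitable $t_i>0$ with $\sum_i t_i=1$ and unit $f_i\in E_k$, and the engine above produces an eigenmap by $\lambda_k$-eigenfunctions, non-constant if $k\ge 1$ (the case $k=0$ being vacuous). Replacing $k$ by the bottom index $k_0$ of the multiplicity cluster of $\lambda_k$ — which carries the same eigenspace and satisfies $\lambda_{k_0-1}<\lambda_{k_0}$ — exhibits the index, eigenmap, and spectral gap required in the statement, completing the equivalence.
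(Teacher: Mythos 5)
The paper offers no proof of this theorem: it is imported verbatim from \cite{ElSoufi_Ilias}, so there is nothing internal to compare against. Your argument is, in substance, the proof from that reference (built on Nadirashvili's convexity trick): identify the one-sided derivatives of $\lambda_k$ along volume-preserving analytic conformal deformations with order statistics of the quadratic form $Q_\psi(f)=\int_M \psi\, b_f\, dV_g$ on the eigenspace $E_k$; note that an eigenmap by $\lambda_k$-eigenfunctions makes $\sum_i b_{u_i}$ constant, so $Q_\psi|_{E_k}$ is never definite and the edge index is extremal; conversely, use separation to place a constant function in the convex hull of $\{b_f\}$ and convert $\sum_i t_i b_{f_i}=\mathrm{const}$ into an eigenmap via the observation that $-4\lambda_k/(n-2)$ cannot be an eigenvalue of $\Delta_g$. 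This is correct, and your closing reduction to the bottom index of the multiplicity cluster is exactly what is needed to match the phrasing ``extremal for \emph{some} eigenvalue $\lambda_k$'' together with the spectral-gap clause. Two points deserve tightening. First, in $L^2$ the convex hull of a compact set need not be closed, so ``compact, hence closed'' is not justified as written; it is true here because $\{b_f:\ f\in E_k,\ \|f\|_{L^2}=1\}$ lies in the finite-dimensional subspace spanned by the functions $\phi_a\phi_b$ and $\nabla\phi_a\cdot\nabla\phi_b$ for a basis $(\phi_a)$ of $E_k$ --- state this, since it also delivers the finite family $\{t_i,f_i\}$ via Carath\'eodory. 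Second, the identification of the left/right derivatives with the $(r{+}1)$-st largest/smallest eigenvalues of $Q_\psi|_{E_k}$ is precisely the Rellich--Kato perturbation lemma established in \cite{ElSoufi_Ilias}; you rightly flag it as the delicate ingredient, but as written it is asserted rather than proved, so it should either be cited or carried out.
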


\subsection{Background on $n$-harmonic maps}

First of all we need some background on the existence of $n$-harmonic maps.
We call a map $u \in W^{1,n}(M,S^\ell)$ \emph{weakly $n$-harmonic,}
if it is a weak solution of
\begin{equation}
-\divergence(|\nabla u|^{n-2}\nabla u) = |\nabla u|^n u.
\end{equation}

We assume that we have fixed a CW-structure on $M,$ and denote by $M^{(l)}$ its $l$-skeleton.
Let $v \colon M \to S^\ell$ be a Lipschitz map, where $l<n=\dim M.$
Denote by $v^{(l)}$ the restriction of $v$ to the $l$-skeleton of $M.$
The \emph{$l$-homotopy type of $v$} is the homotopy type of $v^{(l)}.$

\begin{theorem}[{\cite[Theorem 3.4]{White}}]  \label{existence_2}
There exists a weakly $n$-harmonic map $u \colon M \to S^\ell,$ with well-defined $l$-homotopy type,
which agrees with the $l$-homotopy type of $v.$
Moreover, $u$ minimizes the $n$-energy among all such maps.
\end{theorem}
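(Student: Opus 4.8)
The plan is to obtain $u$ by the direct method of the calculus of variations, minimizing the $n$-energy $E_n[w]=\int_M|dw|^n\,dV_g$ over the class $\mathcal{F}$ of maps $w\in W^{1,n}(M,S^l)$ whose $l$-homotopy type is well defined and equals that of $v$. The first task is to say what the $l$-homotopy type of a merely $W^{1,n}$ map means when $l<n$. After passing to a sufficiently fine subdivision of the fixed CW structure, one considers the $l$-skeleta $P_y$ obtained by generic small translates of the top cells; a Fubini-type slicing estimate shows that for a.e.\ $y$ one has $w|_{P_y}\in W^{1,n}(P_y,S^l)$, and since $\dim P_y=l<n$ the supercritical Sobolev embedding $W^{1,n}(P_y)\hookrightarrow C^{0,1-l/n}(P_y)$ makes this restriction continuous, so $[w|_{P_y}]\in[P_y,S^l]$ is defined. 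A further sweepout argument inside the $(l+1)$-skeleton, on which $w$ is still $W^{1,n}$, shows this class is independent of the generic choice of $y$; for the Lipschitz map $v$ it agrees with the naive $[v|_{M^{(l)}}]$. In particular $v\in\mathcal{F}$, so $\mathcal{F}\neq\emptyset$ and $m:=\inf_{\mathcal{F}}E_n<\infty$.

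Next I would take a minimizing sequence $(u_i)\subset\mathcal{F}$ with $E_n[u_i]\to m$. It is bounded in $W^{1,n}(M,\IR^{l+1})$, so along a subsequence $u_i\rightharpoonup u$ weakly in $W^{1,n}$; by Rellich--Kondrachov $u_i\to u$ strongly in every $L^q$, $q<\infty$, and a.e., hence $|u|=1$ a.e.\ and $u\in W^{1,n}(M,S^l)$. Convexity of $\xi\mapsto|\xi|^n$ gives weak lower semicontinuity of $E_n$, so $E_n[u]\le\liminf_i E_n[u_i]=m$. To see that $u\in\mathcal{F}$, one uses the slicing estimate again: for a.e.\ $y$ the restrictions $u_i|_{P_y}$ are bounded in $W^{1,n}(P_y)$ and converge weakly there, and since $W^{1,n}(P_y)\hookrightarrow\hookrightarrow C^0(P_y)$ they converge uniformly along a further subsequence to $u|_{P_y}$; hence $[u_i|_{P_y}]=[u|_{P_y}]$ for $i$ large, so $u$ has the $l$-homotopy type of $v$ and $E_n[u]=m$.

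Finally, minimality forces the Euler--Lagrange equation. For $\varphi\in C^\infty(M,\IR^{l+1})$ the normalized variations $u_t:=(u+t\varphi)/|u+t\varphi|$ lie in $W^{1,n}(M,S^l)$, depend in a $C^1$ manner on $t$, and --- being $W^{1,n}$-close to $u$, hence $C^0$-close on a generic skeleton by the slicing estimate --- keep the $l$-homotopy type of $u$ for $|t|$ small, so $\frac{d}{dt}\big|_{t=0}E_n[u_t]=0$; projecting the admissible direction $\varphi-\langle u,\varphi\rangle u$ onto $T_uS^l$ and integrating by parts, this is precisely the weak form of $-\divergence(|\nabla u|^{n-2}\nabla u)=|\nabla u|^n u$, i.e.\ $u$ is weakly $n$-harmonic. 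The main obstacle is the homotopy-theoretic bookkeeping around the generic restriction: proving that the $l$-homotopy type is well defined on $W^{1,n}$ and is exactly preserved both under weak convergence and under small $W^{1,n}$-perturbations. This is where the hypothesis $l<n$ and the slicing estimates do the essential work; the remainder is the routine direct method, and the full argument is carried out in \cite{White}.
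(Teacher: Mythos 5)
This theorem is quoted from \cite{White} (Theorem 3.4); the paper offers no proof of its own, only the citation. Your outline reproduces essentially the argument of the cited source --- defining the $l$-homotopy type via restriction to generic $l$-skeleta using the supercritical embedding $W^{1,n}(P_y)\hookrightarrow C^{0}(P_y)$ for $l<n$, running the direct method with a Fubini-type slicing argument to show the $l$-homotopy type survives weak $W^{1,n}$ convergence, and deriving the weak $n$-harmonic map equation from normalized variations --- so it is correct and takes essentially the same approach as White's proof.
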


We do not elaborate here on how the $l$-homotopy type is defined for maps in $W^{1,n}(M,S^\ell).$
For our purposes this is not necessary, since the map $u$ is actually continuous.

\begin{theorem} \label{regularity}
Let $u \in W^{1,n}(M,S^\ell)$ be a weakly $n$-harmonic map, which is a minimizer for its own $l$-homotopy type.
There is a constant $C$ depending on an upper bound on the $n$-energy of $u,$ and on the bounds of the sectional curvature and injectivity radius of $M,$ such that
$\|u \|_{C^{1,\alpha}} \leq C.$ 
\end{theorem}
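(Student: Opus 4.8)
The plan is to establish the $C^{1,\alpha}$-bound in two stages: first an interior-type regularity estimate with constants controlled by the local geometry, then a covering/patching argument that turns the scale-dependent local estimates into a global bound depending only on the stated data. Since the $n$-energy $E_n[u]$ is conformally invariant and $u$ is a minimizer in its $l$-homotopy class, the key monotonicity input is an $n$-energy density decay estimate: on a geodesic ball $B_r(x)$ of radius $r$ below the injectivity radius, the rescaled energy $r^{n-n}\int_{B_r(x)}|du|^n\,dV_g = \int_{B_r(x)}|du|^n\,dV_g$ controls the oscillation of $u$ and, via the partial regularity theory for minimizing $p$-harmonic maps (here $p=n$), forces smallness on small balls. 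The dimensional borderline $p=n$ is favorable: $W^{1,n}$ embeds into VMO, and the monotonicity/small-energy regularity theorem for minimizing $n$-harmonic maps (Hardt--Lin, Fuchs, Luckhaus, as adapted by White) yields that once $\int_{B_r(x)}|du|^n < \eps_0$ for a geometric threshold $\eps_0$, the map is $C^{1,\alpha}$ on $B_{r/2}(x)$ with a quantitative bound on $\|u\|_{C^{1,\alpha}(B_{r/2}(x))}$ in terms of $\eps_0$, $r$, and the curvature and injectivity radius bounds.

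The first step, then, is to fix harmonic (or geodesic normal) coordinates on balls of a uniform radius $r_0$ determined by the lower bound on injectivity radius and the two-sided sectional curvature bound; in such coordinates the metric $g$ and its first derivatives are controlled, so the Euler--Lagrange system \eqref{n-harmonic-eqn} becomes a uniformly (degenerate) elliptic system whose structure constants are geometrically bounded. The second step is to convert the global $n$-energy bound $E_n[u]\le \Lambda$ into a covering statement: by a Vitali-type argument there is a finite collection of balls $B_{r_1}(x_i)$, with $r_1\le r_0$ chosen so that on each ball the local energy is below $\eps_0$ — the number of ``bad'' balls where this fails is bounded by $\Lambda/\eps_0$, and on those one shrinks the radius further, using absolute continuity of the energy and the a priori Hölder continuity from White's existence theorem (the map is already known to be continuous, hence its energy has no concentration at a point once we pass to a fine enough scale, with the scale again controlled only by $\Lambda$ and the geometry via a compactness/contradiction argument). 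The third step is to invoke the small-energy $\eps$-regularity on each ball to get local $C^{1,\alpha}$ bounds, and then patch: finitely many balls of controlled radius cover $M$, so the local bounds assemble into $\|u\|_{C^{1,\alpha}(M)}\le C(\Lambda,\text{curvature},\inrad)$.

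The main obstacle is the passage from ``continuous, hence locally small energy at a fine enough scale'' to a scale that is quantitatively controlled by only $\Lambda$ and the geometry, independent of $u$ itself. A priori, the modulus of continuity coming from White's theorem is not obviously uniform across all minimizers with energy $\le \Lambda$. The way I would resolve this is a standard blow-up/compactness argument: if no uniform scale existed, one would extract a sequence $u_j$ of minimizers with energies $\le\Lambda$ on manifolds $(M_j,g_j)$ with uniformly bounded geometry, concentrating energy at points $x_j$ at scales $\rho_j\to 0$; rescaling by $\rho_j$ and using the curvature and injectivity radius bounds, the pointed manifolds converge (in $C^{1,\alpha}$, after Cheeger--Gromov) to Euclidean $\IR^n$, and the rescaled maps converge to a nonconstant minimizing $n$-harmonic map $\IR^n\to S^l$ with finite energy and no energy at infinity; by the conformal invariance of $E_n$ and a standard removable-singularity plus Liouville argument for minimizing $n$-harmonic maps from $\IR^n$ (equivalently $S^n$, using stereographic projection and the $n$-conformal invariance), such a map must be constant when $l < n$ and it has bounded image in a hemisphere — contradiction. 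The remaining steps (the $\eps$-regularity estimate itself and the coordinate reductions) are by now classical and I would cite Hardt--Lin and Fuchs rather than reproduce them.
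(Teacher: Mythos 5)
Your overall skeleton ($\eps$-regularity for minimizers, a covering argument, and a compactness/blow-up step to rule out energy concentration at the conformally invariant exponent $p=n$) is the standard route behind such estimates, but as written it has two genuine gaps. The first is that you never address the fact that $u$ minimizes only subject to a constraint, namely within its $l$-homotopy class. Every result you want to quote for ``minimizing'' $n$-harmonic maps --- the Hardt--Lin/Fuchs/Luckhaus small-energy regularity, strong compactness of minimizers, and the assertion that a blow-up limit is again a locally minimizing map --- is proved by inserting comparison maps that agree with $u$ outside a small ball, and such comparison maps are admissible competitors only if one knows that replacements supported in a ball do not change the $l$-homotopy type. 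This is exactly the point the paper makes: since $l<\dim M$, local replacements preserve the $l$-homotopy type \cite[Theorem 2.8]{Pigola_Veronelli}, so $u$ is an unconstrained local minimizer, and then the quantitative regularity theory for minimizing $p$-harmonic maps \cite[Theorem 2.19]{NVV} gives the uniform $C^{1,\alpha}$ bound directly. Without this bridge, your appeals to the minimizing regularity theory are not justified.

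The second gap is the step you yourself single out as the main obstacle. Your resolution rests on the claim that a finite-energy $n$-harmonic bubble $\IR^n\to S^l$ ``must be constant when $l<n$,'' justified by a ``removable singularity plus Liouville argument'' and a remark about the image lying in a hemisphere. No such Liouville theorem holds on the basis of $l<n$ alone: any eigenmap, e.g.\ the Hopf map $S^3\to S^2$, is $p$-harmonic for every $p$, and composing with a stereographic projection produces a nonconstant finite-energy $3$-harmonic map $\IR^3\to S^2$. Hence any argument excluding bubbles must use minimality in an essential way; the correct mechanism is a good-slice plus logarithmic-cutoff filling (Luckhaus-type) comparison map, which shows that a locally minimizing map cannot concentrate a fixed amount of energy on a ball whose boundary trace has small scale-invariant energy --- and this again presupposes the admissibility of local replacements discussed above. (The same comparison construction is also what guarantees that no energy is lost along necks, i.e.\ that your blow-up limit is nonconstant and still minimizing.) So the architecture can be made to work, but the two load-bearing ingredients --- preservation of the $l$-homotopy type under local surgery and the minimality-based exclusion of concentration --- are missing or incorrectly argued in your sketch, whereas the paper's proof consists precisely of citing them.
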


\begin{proof}
Let $x \in M$ and $r>0$ be small enough.
If  $v \in W^{1,n}(B(x,r),S^\ell)$ with $u=v$ on $\partial B(x,r)$, we can consider the map $w \in W^{1,n}(M,S^\ell)$ given by
$u$ in $M \setminus B(x,r)$ and by $v$ in $B(x,r)$.
It is shown in \cite[Theorem 2.8]{Pigola_Veronelli} that the $l$-homotopy type of $w$ agrees with the $l$-homotopy type of $u$.
In particular, we need to have
\begin{equation*}
\int_{B(x,r)} |du|^n dV_g \leq \int_{B(x,r)} |dv|^n dV_g,
\end{equation*}
which means that $u$  is a \emph{minimizing} $n$-harmonic map.
Therefore, the assertion follows e.g.\ from \cite[Theorem 2.19]{NVV}.
\end{proof}

In particular, these estimates are uniform as $g$ varies over a compact set of $\mathcal{R},$ as long as the energy stays bounded.

At points, in which we do not have a lack of ellipticity, we actually get higher regularity.

\begin{theorem} \label{higher_regularity}
A weakly $n$-harmonic map $u \in C^{1,\alpha}$ is smooth near points with $\nabla u \neq 0.$ 
\end{theorem}

This follows from standard techniques for quasilinear elliptic equations. 
For completeness, we give a proof in \cref{ssec_reg}.

The main reason for the restrictive assumptions in item $(ii)$ of \cref{main} is that the above results
do not imply that for a sequence $g_k \to g$  we can find a sequence of  $n$-harmonic maps $u_k$ (w.r.t.\ $g_k$),
such that $u_k \to u,$ for a given $n$-harmonic map $u.$

In the case of maps to the circle, this problem does not appear, thanks to

\begin{theorem}[{\cite[Theorem A]{Veronelli}}] \label{uniqueness}
Up to rotations of $S^1,$ $n$-harmonic maps $u \colon M \to S^1$ are unique in their homotopy class.
\end{theorem}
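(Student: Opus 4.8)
The plan is to pass from $S^1$-valued maps to closed $1$-forms and then run a convexity argument. Recall that a map $u\colon M\to S^1$ (smooth, or merely $W^{1,n}$) determines the closed $1$-form $\omega_u:=u^*(d\theta)$, whose periods lie in $2\pi\IZ$, and that $u$ is freely homotopic to $u'$ precisely when $\omega_u$ and $\omega_{u'}$ are cohomologous: one uses $[M,S^1]=H^1(M,\IZ)$ together with the fact that $H^1(M,\IZ)$ is torsion free, so the homotopy class of $u$ is detected by the de Rham class of $\omega_u$. Moreover, if $\omega_u=\omega_{u'}$ then $u\overline{u'}\colon M\to S^1$ has exact pullback of $d\theta$, hence lifts to a map to $\IR$ with vanishing differential, so $u$ and $u'$ differ by a rotation of $S^1$. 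Finally, writing $u$ locally as $e^{i\vf}$, so that $\omega_u$ has local potentials $\vf$ and $|\nabla u|=|\nabla\vf|$, the $n$-harmonic map equation \eqref{n-harmonic-eqn}, after projecting onto the tangent spaces of $S^1$, becomes locally $\divergence(|\nabla\vf|^{n-2}\nabla\vf)=0$; these local identities glue to the single global statement that $|\omega_u|^{n-2}\omega_u$ is co-closed. Thus the claim reduces to: \emph{in each fixed class $\alpha\in H^1(M,\IR)$ there is at most one closed $1$-form $\omega$ with $[\omega]=\alpha$ such that $|\omega|^{n-2}\omega$ is co-closed.}

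For this I would fix a smooth representative $\omega_0$ of $\alpha$ and consider the affine space $\mathcal A_\alpha=\{\omega_0+df:f\in W^{1,n}(M)\}$ of all closed forms in the class $\alpha$, together with the functional $F(\omega)=\int_M|\omega|^n\,dV_g$, which is exactly the $n$-energy $E_n$ of the corresponding $S^1$-valued map. Since $v\mapsto|v|^n$ is $C^1$ for $n>1$ with gradient $n|v|^{n-2}v$, the first variation of $F$ on $\mathcal A_\alpha$ is $n\int_M\la|\omega|^{n-2}\omega,df\ra$, so its critical points are precisely the closed forms $\omega$ in $\alpha$ with $|\omega|^{n-2}\omega$ co-closed, i.e.\ exactly the forms we are after. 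Now $F$ is strictly convex on $\mathcal A_\alpha$: if $\omega,\omega'\in\mathcal A_\alpha$ are distinct, then $\omega-\omega'=df$ is a nonzero exact form, hence $\omega(x)\neq\omega'(x)$ on a set of positive measure, and on that set the integrand along $t\mapsto|(1-t)\omega+t\omega'|^n$ is strictly convex in $t$ — indeed $t\mapsto|a+tb|^n$ is strictly convex on $\IR$ whenever $b\neq0$ and $n>1$, including at the possible instant where $a+tb$ passes through the origin. A strictly convex functional on an affine space has at most one critical point; therefore the $n$-harmonic form in the class $\alpha$ is unique, and together with the first paragraph this yields uniqueness of the $n$-harmonic map up to rotation.

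The main obstacle is not the convexity, which is soft, but the rigorous passage between $n$-harmonic $S^1$-valued maps and co-closed forms $|\omega|^{n-2}\omega$ at the regularity actually available: one must know that such maps admit angular lifts $\vf$ (locally, and in the suitable global sense) and that the $l=1$ homotopy type coincides with the honest homotopy class. Invoking the $C^{1,\alpha}$-regularity of \cref{regularity} makes the lifts classical and removes this difficulty, at the cost of restricting to minimizers of the $1$-homotopy type, which is the relevant case. A secondary point to check is the behaviour at points where $\nabla u=0$: there $|v|^n$ is only $C^1$, but this already suffices for the first variation, and strict convexity is unaffected, so no separate argument is needed.
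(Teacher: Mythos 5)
Your argument is essentially correct, but note that the paper itself offers no proof of this statement: it is quoted verbatim from Veronelli's general comparison theorem for $p$-harmonic maps in homotopy classes, whose proof works for arbitrary nonpositively curved (in fact more general) targets and proceeds by comparison/parallel-transport arguments rather than convexity. Your route is a self-contained specialization to the flat, abelian target $S^1$: identifying a circle-valued map with the closed $1$-form $\omega_u=u^*(d\theta)$, observing that the tangential part of \eqref{n-harmonic-eqn} says exactly that $|\omega_u|^{n-2}\omega_u$ is co-closed (the normal part being automatic for sphere-valued maps), and then using strict convexity of $\omega\mapsto\int_M|\omega|^n\,dV_g$ on the affine space $\omega_0+d\,W^{1,n}(M)$ to conclude there is at most one critical point per cohomology class; the pointwise strict convexity of $t\mapsto|a+tb|^n$ for $b\neq0$, including the collinear case where the segment passes through the origin, is verified correctly, and the step ``critical point of a convex functional $\Rightarrow$ global minimizer $\Rightarrow$ uniqueness under strict convexity'' is sound. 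What Veronelli's theorem buys is generality (all targets of nonpositive curvature, no reliance on the special form of the equation for $S^1$); what your argument buys is elementarity and transparency in precisely the case the paper needs. The only genuine caveats are the ones you already flag: to identify homotopy classes with de Rham classes of $\omega_u$, to lift the null-homotopic map $u\overline{u'}$ to a $W^{1,n}$ function, and to know that the weak equation yields the vanishing first variation, you use continuity (indeed $C^{1,\alpha}$ regularity via \cref{regularity}) of the maps involved; this restricts your proof to the continuous/minimizing setting, which is weaker than the cited Theorem A as stated but covers exactly the situation in which the paper invokes \cref{uniqueness}.
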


\section{Proofs} \label{proofs}

\subsection{Higher regularity of $n$-harmonic maps} \label{ssec_reg}
In this section we give a proof of \cref{higher_regularity}.
We start with $W^{2,2}$-regularity.
The proof follows using standard techniques, since under our assumptions the equation is of the form
\begin{equation}
- (L u)(x) - b(x) u(x)=0,
\end{equation}
with $L$ a quasilinear operator, which is elliptic at $u$ (as demonstrated in \cref{elliptic} below) and $b \in L^\infty.$

\begin{lemma}
Let $U \subset M$ be open and $u \colon (U,g) \to S^\ell$ be weakly $n$-harmonic.
Assume that $u \in C^{1,\alpha}(U,S^\ell)$ with $\nabla u \neq 0$ everywhere in $u$.
Then we have $u \in W^{2,2}_{loc}(U,S^\ell).$
\end{lemma}

\begin{proof}
For simplicity, we focus on the case $g_{ij}=\delta_{ij}$ 
and denote the usual differential of $u$ in Euclidean Space by $Du.$
The general case follows along the same lines but with some more notation.

Take open subsets $W\subset \subset V \subset \subset U,$ and a cut-off function $\eta$  which is $1$ in $W,$ and has $\supp \eta \subset V.$ 
We show that $u \in W^{2,2}(W,S^\ell).$
We use the test functions given by $\phi^k=-D_s^{-h}(\eta^2D_s^h u^k),$
where $D_s^h$ denotes the difference quotient operator in coordinate direction $s$,
\begin{equation}
D_s^h \phi (x)=\frac{1}{h}(\phi(x+h e_s)-\phi(x)).
\end{equation}
To handle notation, let us write
\begin{equation}
F^\alpha_k(D u)= |D u|^{n-2} \partial_\alpha u^k,
\end{equation}
and
\begin{equation}
G_k(u,D u) = |D u|^n u^k.
\end{equation}
Then we have
\begin{equation} \label{eqn}
-  \int_U F^\alpha_k(D u) \partial_\alpha D_s^{-h}(\eta^2 D_s^h u^k) = -  \int_U G_k(u, Du) D_s^{-h}(\eta^2 D_s^h u^k),
\end{equation}
Note that this is well-defined thanks to H{\"o}lder's inequality.
For the left hand side of \eqref{eqn}, we have
\begin{equation} \label{int_part}
-  \int_U F_k^\alpha(D u) \partial_\alpha D_s^{-h}(\eta^2 D_s^h u^k) = \int_U D_s^h (F^\alpha_k(D u)) \partial_\alpha (\eta^2 D_s^h u^k).
\end{equation}
We can write
\begin{equation}
\begin{split}
D_s^h F^\alpha_k(D u) & = \frac{1}{h} \int_0^1 \frac{d}{dt} F^\alpha_k(D u + t h D_s^h D u)dt
\\
& = \frac{1}{h} \int_0^1 	\frac{\partial F^\alpha_k}{\partial q_\beta^l} (D u + t h D_s^h D u)hD_s^h \partial_\beta u^l dt
\\
& =\int_0^1 	\frac{\partial F^\alpha_k}{\partial q_\beta^l} (D u + t h D_s^h D u)dt\, D_s^h \partial_\beta u^l dt
\\
& =: \theta_{kl}^{\alpha \beta}(D u) D_s^h \partial_\beta u^l.
\end{split}
\end{equation}
Note that this is well defined pointwise, since $u \in C^{1,\alpha}.$
The condition $|Du|\geq c >0,$ implies that $\theta^{\alpha \beta}_{kl}$ are uniformly super strongly elliptic for $h \ll 1,$ as demonstrated below.
Since the coefficients $\theta$ are uniformly super strongly elliptic, we have
\begin{equation}
\int_U \eta^2 |D_s^h Du|^2 \leq C  \int_U \eta^2 \theta_{kl}^{\alpha \beta}(D u) (D^h_s\partial_\alpha u^k)  (D_s^h \partial_\beta u^l).
\end{equation}
Moreover, since $\theta$ and $|D \eta|$ are bounded, we can estimate
\begin{equation}
\begin{split}
\left| \int_U \theta_{kl}^{\alpha \beta} (D_s^h \partial_\beta u^l) (D_s^h u^k) \eta \partial_\alpha \eta \right|
& \leq C \int_U |D_s^h Du| |D_s^h u|\eta
\\
& \leq C \eps \int_U \eta^2 |D_s^h Du|^2 + \frac{C}{\eps} \int_{V} |D_s^h u|^2
\\
& \leq C \eps \int_U \eta^2 |D_s^h Du|^2  + \frac{C}{\eps} \int_U |Du|^2,
\end{split}
\end{equation}
where we have used Young's inequality and $u \in W^{1,2}.$
Combining the last two estimates with \eqref{eqn} and \eqref{int_part}, we find that we can choose $\eps$ sufficiently small so that
\begin{equation}
\int_U \eta^2 |D_s^h Du|^2 \leq C \int_U |Du|^2 + C \left| \int_U G_k(u, Du) D_s^{-h}(\eta^2 D_s^h u^k)\right|.
\end{equation}
To estimate the last summand above, we note that $u,|D u| \in L^\infty,$ implies $G_k(u,D u) \in L^\infty,$ hence
\begin{equation}
\begin{split}
\left| \int_U G_k(u, Du) D_s^{-h}(\eta^2 D_s^h u^k)  \right|
& \leq C \int_U |D_s^{-h} (\eta^2 D_s^h u^k)|
\\
& \leq \frac{C}{\eps} \vol(U) + C \eps \int_U |D_s^{-h}(\eta^2 D_s^h u^k)|^2
\\
& \leq \frac{C}{\eps} + C \eps \ \int_U |D(\eta^2 D_s^h u^k)|^2
\\
& \leq \frac{C}{\eps}(1 + \int_U |Du|^2) + C \eps \int_U \eta^2 |D_s^h Du^k|^2.
\end{split}
\end{equation}
For $\eps$ sufficiently small, we can absorb the last term, and find
\begin{equation}
\int_{V} |D_s^h Du|^2 \leq \int_U \eta^2 |D_s^h Du|^2 \leq \frac{C}{\eps}(1 + \int_U |Du|^2).
\end{equation}
Thus $u \in W^{2,2}_{loc}(U,S^\ell).$
\end{proof}

We still need to justify that the coefficients $\theta^{\alpha \beta}_{kl}$ are uniformly super strongly elliptic.
\begin{lemma} \label{elliptic}
There is $h_0>0$ depending on $\| Du \|_{C^{0,\alpha}}$ such that we have
$\theta_{kl}^{\alpha \beta} A^k_\alpha A^l_\beta \geq \nu |A|^2,$ for any $h$ with $|h| \leq h_0$ and $\nu=\nu(c),$ 
where $|Du|^2 \geq c.$
\end{lemma}

\begin{proof}
We have
\begin{equation}
\frac{\partial F_k^\alpha}{\partial q_\beta^l}(q)=|q|^{n-4} (|q|^2 \delta^{\alpha \beta}\delta_{kl} + (n-2) q_\alpha^k q_\beta^l).
\end{equation}
Thus, it is not very hard to see that
\begin{equation}
\begin{split}
\frac{\partial F_k^\alpha}{\partial q_\beta^l}(q) A_\alpha^k A_\beta^l 
& = |q|^{n-4} (|q|^2 \delta^{\alpha \beta}\delta_{kl} A_\alpha^k A_\beta^l + (n-2) q_\alpha^k q_\beta^l A_\alpha^k A_\beta^l)
\\
& \geq |q|^{n-2}|A|^2 
\\
& \geq 2 \nu |A|^2,
\end{split}
\end{equation}
as long as $|q|^2 \geq (2 \nu)^{2/(n-2)}.$
Since $Du \in C^{0,\alpha},$ and $|D u|^2 \geq c,$ we can choose $h_0 \ll 1,$ such that 
$|(1-t)D u (x+he_s) + t D u(x)|^2 \geq c/2,$
for all $x,$ and $|h| \leq h_0.$
Clearly, this implies
\begin{equation*}
\begin{split}
\theta_{kl}^{\alpha \beta} (D u)(x) A_\alpha^k A_\beta^l 
& = \int_0^1 \frac{\partial F_k^\alpha}{\partial q_\beta^l}((1-t)D u (x+he_s) + t D u(x)) A_\alpha^k A_\beta^l dt
\\
& \geq  \int_0^1 \nu |A|^2 dt 
\\
& \geq  \nu |A|^2,
\end{split}
\end{equation*}
for $\nu = c^{(n-2)/2}/2$.
\end{proof}

In the next step we derive the equation for $\partial_\alpha u^k$ and apply Schauder estimates to gain higher regularity.
In particular, this completes the proof of \cref{higher_regularity}

\begin{lemma}
Under the above assumptions, the function $u$ is smooth.
\end{lemma}

\begin{proof}
Write
\begin{equation}
\vartheta_{kl}^{\alpha \beta} = \frac{\partial F^\alpha_k}{\partial q_\beta^l}.
\end{equation}
By the calculation above, these coefficients are uniformly super strongly elliptic
at $u.$
We test the equation for $u^k$ with $\partial_\alpha \phi^k$ for some test function $\phi$ and integrate by parts in order to find
\begin{equation}
\int_U \vartheta_{kl}^{\alpha \beta} (\nabla u) \partial_{\beta \gamma} u^l \partial_\alpha \phi^k = \int_U \partial_\gamma G_k(u, \nabla u) \phi^k
\end{equation}
In other words, $v=\partial_\gamma u$ is a weak solution to
\begin{equation}
-\divergence (\vartheta(Du) v) = \partial_\gamma G(u,Du).
\end{equation}
Since $|D u|^2 \geq c >0,$ the right hand side of this equation is in $C^{k,\alpha},$
once we have $u \in C^{k+1,\alpha}.$
In this case the left hand side has coefficients in $C^{k,\alpha},$ thus it follows that
$v \in C^{k+1,\alpha}$ and thus $u \in C^{k+2, \alpha}.$
Since we know $u \in C^{1,\alpha},$ we can start this bootstrap argument at $k=0,$ and get $u \in C^{\infty}.$
\end{proof}

\subsection{Proofs of main results}

We start with the following simple but crucial observation.

\begin{lemma} \label{n-harmonic-eigenmap}
Let $u \colon (M,g) \to S^\ell$ be a smooth $n$-harmonic map with $du \neq 0$ everyhwere.
Then there is metric $g'$ conformal to $g,$
such that $u \colon (M,g') \to S^\ell$ is an eigenmap.
\end{lemma}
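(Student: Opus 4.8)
The plan is to use the conformal invariance of the $n$-energy to normalize the density of $u$. Since $u$ is smooth and $du\neq 0$ everywhere, the function $\rho:=|\nabla_g u|^2$ is smooth and strictly positive on $M$, so $g':=\rho\,g$ is a genuine Riemannian metric, conformal to $g$; this will be the desired metric $g'$. (Here the hypothesis $du\neq 0$ is used in an essential way: it is exactly what makes the conformal factor positive.)

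First I would record how the relevant quantities behave under the conformal change $g\mapsto g'=\rho\,g$. For the components of $u$, equivalently for $u$ viewed as an $\IR^{l+1}$-valued map, one has $|\nabla_{g'}u|_{g'}^2=\rho^{-1}|\nabla_g u|_g^2=\rho^{-1}\rho\equiv 1$, so with this particular choice of conformal factor the energy density of $u$ becomes identically $1$. It then remains only to check that $u$ is still harmonic with respect to $g'$, with density $|\nabla_{g'}u|_{g'}^2=1$ in the sense of \eqref{eigen_eqn}.

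For this I would invoke the conformal invariance of $E_n[w]=\int_M|dw|^n\,dV$: since $|dw|_{g'}^n\,dV_{g'}=|dw|_g^n\,dV_g$ pointwise for every map $w$, the two variational problems defined by $E_n$ with respect to $g$ and to $g'$ literally coincide, so the smooth $n$-harmonic map $u$ is also $n$-harmonic with respect to $g'$, i.e.
\begin{equation*}
-\divergence_{g'}\bigl(|\nabla_{g'}u|_{g'}^{n-2}\,\nabla_{g'}u\bigr)=|\nabla_{g'}u|_{g'}^{n}\,u .
\end{equation*}
Now $|\nabla_{g'}u|_{g'}^{n-2}\equiv 1$, so the left-hand side is simply $-\divergence_{g'}(\nabla_{g'}u)=\Delta_{g'}u$, while the right-hand side equals $u=|\nabla_{g'}u|_{g'}^2\,u$. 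Hence $\Delta_{g'}u=|\nabla_{g'}u|_{g'}^2\,u$ with constant density, which is precisely the statement that $u\colon(M,g')\to S^l$ is an eigenmap.

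I do not expect a genuine obstacle here; as noted above, this is the crucial but elementary observation, and the whole argument is contained in the two displays. The only points that need care are the bookkeeping of conformal weights, namely $|\nabla_{g'}u|_{g'}^2=\rho^{-1}|\nabla_g u|_g^2$ and $dV_{g'}=\rho^{n/2}\,dV_g$, which together give the invariance of $E_n$, and the observation that the weight $|\nabla u|^{n-2}$ collapses to a constant precisely because $\rho$ was chosen to make the density constant. If a unit-volume representative is wanted, one may further rescale $g'$ by a positive constant, which changes neither harmonicity nor constancy of the density.
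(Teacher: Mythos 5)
Your proposal is correct and takes essentially the same route as the paper: both choose the conformal factor $\rho=|du|_g^2$, which is positive precisely because $du\neq 0$, observe that $|du|_{g'}^2\equiv 1$, and then reduce the $n$-harmonic equation for $g'$ to $\Delta_{g'}u=u$. The only cosmetic difference is that you justify the passage to the $g'$-equation via conformal invariance of $E_n$, while the paper directly rewrites $-\divergence_g(|du|_g^{n-2}\nabla u)=|du|_g^n u$ using the conformal transformation of the Laplacian in dimension $n$; both verifications are fine.
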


\begin{proof}
Define $g'=|du|_g^2 g.$
Since we assumed $du \neq 0$ everywhere, this defines a smooth metric, which is  conformal
to $g.$
Then $|du|_{g'}^2=|du_g|^{-2} |du|_g^2=1.$
Finally, $u$ solves
\begin{align*}
-\divergence_g (|du|_g^{n-2} \nabla u)= |du|_g^n u,
\end{align*}
which can also be written as
\begin{align*}
\Delta_{g'} u = -\frac{1}{|du|_g^n}\divergence_g (|du|_g^{n-2} \nabla u)= u,
\end{align*}
hence $u \colon (M,g') \to S^\ell$ is an eigenmap.
\end{proof}

In order to prove \cref{main} it now suffices to show that metrics close to the initial metric $g$ on $M$ also
admit smooth $n$-harmonic maps with nowhere vanishing derivative.

\begin{proof}[Proof of \cref{main} (i)]
Let $ u \colon (M,g) \to S^1$ be an eigenmap and
assume that the assertion of the theorem was not correct.
This means that any neighbourhood $U \subset \mathcal{C}$ of $[g]$ contains a conformal class which does not contain any representative which admits an eigenmap to $S^1$.
Let $U_k\subset \mathcal{R}$ be a sequence of open neighbourhoods of $g$ with $\cap_{k \in \IN} U_k = \{g\}$.
(Such a sequence exists since the $C^\infty$-topology on $\mathcal{R}$ is first countable and Hausdorff.)
Denote by $\pi \colon \mathcal{R} \to \mathcal{C}$ the quotient map and observe that this is an open map.
In particular, the sets $\pi(U_k) \subset \mathcal{C}$ are open and
we can find $g_k \in U_k$ such that no metric in $[g_k]$ admits an eigenmap to $S^1$.
By \cref{n-harmonic-eigenmap} this implies that $g_k$ itself cannot admit a nowhere
vanishing $n$-harmonic map to $S^1$.

We now plan to use \cref{existence_2}  to obtain weakly $n$-harmonic maps $u_k \colon (M,g_k) \to S^1$ which are close to $u$ for $k$ sufficiently large.
By assumption the $u_k$ have some point $x_k$ with $du_k(x_k)=0$.
This forces $u$ to have a critical point as well, which gives the desired contradiction.

More precisely, we apply \cref{existence_2} to $u \colon (M,g_k) \to S^1$ and obtain $n$-harmonic representatives $u_k \colon (M,g_k) \to S^1$ of $[u]$.
If $du_k \neq 0$ everywhere, \cref{higher_regularity} implies that $u_k$ is a smooth $n$-harmonic map from $(M,g_k)$ to $S^1$ with nowhere vanishing derivative contradicting the construction of $g_k$ in the preceding paragraph.
Therefore, we can find $x_k \in M$ such that $du_k(x_k)=0$.
Since $\dim(M) \geq 3$ and $S^1 \simeq K(\IZ,1),$ we have that $w \simeq u$ if and only if their $l$-homotopy type agrees for some $l \geq 2.$
In particular, we have that
\begin{equation*}
\int_M |du_k|^n dV_{g_k} \leq \int_M |du|^n dV_{g_k}
\leq C \int_M |du|^n dV_g,
\end{equation*}
so that we are in the position to apply \cref{regularity}.

By taking a subsequence if necessary, we may assume that $x_k \to x.$
Thanks to \cref{regularity} and the compact embedding $C^{1,\alpha}(M) \hookrightarrow C^{1,\beta}(M)$ for $\beta<\alpha, $ we can extract a further
subsequence, such that $u_k \to v$ in $C^{1,\beta}(M,g).$
We have 
\begin{align*}
\int_M |dv|_g dV_g & = \lim_{k \to \infty} \int_M |dv|_{g_k} dV_{g_k}
\\
& \leq \lim_{k \to \infty} \left( \int_M |du_k|_{g_k} dV_{g_k} + \int_M \left| |dv|_{g_k} - |du_k|_{g_k} \right| dV_{g_k} \right)
\\
& \leq  \lim_{k \to \infty}\left( \int_M |dw|_{g_k} dV_{g_k} + C d_{C^{1, \beta}(M,g_k)} (v,u_k) \right)
\\ 
& \leq \lim_{k \to \infty} \int_M |dw|_{g_k} dV_{g_k} + \lim_{k \to \infty} C d_{C^{1, \beta}(M,g)} (v,u_k)
\\
& = \int_M |dw|_g dV_g,
\end{align*}
for any $w \simeq u.$
It follows, that $v$ is $n$-harmonic and homotopic to $u.$
Thus it follows from \cref{uniqueness} that there is $A \in SO(2),$ such that
$A\circ v=u.$
Then $A \circ u_k \to u$ in $C^{1,\beta}(M).$
It follows, that
\begin{align*}
 |du(x)| \leq \limsup_{k \to \infty} C d(x,x_k)^\beta =0,
\end{align*}
contradicting the assumption on $u.$
\end{proof}

In order to adapt the strategy from above for more general situations,
we need to understand whether there exist eigenmaps $u \colon (M,g) \to S^\ell,$
which can be approximated through $n$-harmonic maps for any sequence of metrics
$g_k \to g.$

This is precisely what we do now for product metrics $g_{st.}+g_N$ on $S^\ell \times N.$
The natural candidate here is the projection map onto $S^\ell.$
In what follows $n$ will denote the dimension of $N,$ so that the dimension of $N \times S^\ell$ is $n+l.$

\begin{prop} \label{prop_prod_est}
Let $g=g_N + g_{st.}$ be a product metric on $N \times S^\ell,$
with $g_{st.}$ the round metric of curvature $1$ on $S^\ell.$
The projection $u \colon N \times S^\ell \to S^\ell$ onto the second factor
is the unique minimizer for the $(n+l)$-energy in its $l$-homotopy class up to rotations of $S^\ell.$
\end{prop}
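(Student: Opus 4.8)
The plan is to show that the projection $u$ is $(n+l)$-harmonic with constant density, hence an eigenmap, and then to invoke a uniqueness statement for minimizers in a homotopy class; the genuinely new content is the latter. First I would verify directly that $u\colon N\times S^l\to S^l$ is $(n+l)$-harmonic: since $u$ is a Riemannian submersion onto $(S^l,g_{st.})$ with totally geodesic fibers $N\times\{p\}$, one has $|\nabla u|^2\equiv l$, so $|\nabla u|^{n-2}$ (with $n$ here the ambient dimension $n+l$) is constant, and the equation $-\divergence(|\nabla u|^{n+l-2}\nabla u)=|\nabla u|^{n+l}u$ reduces, after dividing by the constant, to $\Delta u=|\nabla u|^2 u$, i.e. to the statement that the restriction of the ambient coordinate functions of $\IR^{l+1}$ to the $S^l$-factor are $\lambda_1(S^l)=l$ eigenfunctions of $\Delta_{g}$. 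This is immediate because $\Delta_g=\Delta_{g_N}+\Delta_{g_{st.}}$ on functions and these functions depend only on the $S^l$-variable. So $u$ is an eigenmap, in particular an $(n+l)$-harmonic map in its own $l$-homotopy class, and by White's theorem (\cref{existence_2}) there is a minimizer in that $l$-homotopy class. By \cref{regularity} and \cref{higher_regularity} any such minimizer is $C^{1,\alpha}$ and smooth wherever its differential is nonzero.

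The crux is uniqueness up to rotation. Here I would exploit the special structure of the product: a minimizer $w$ for the $(n+l)$-energy in the $l$-homotopy class of the projection has $\deg$ (in the appropriate $l$-dimensional sense) equal to that of $u$ on $S^l$-fibers, and I would try to compare its energy to that of $u$ using a calibration-type or coarea argument. Concretely, by the coarea formula applied to $w$ (or to its regular part), for any unit-speed parametrization one has $E_{n+l}[w]=\int_M|dw|^{n+l}\,dV\ge \int_M |\Lambda^l dw|\cdot c\,dV$ where the comparison is with the pullback of the volume form of $S^l$; integrating the pulled-back volume form over $M$ computes (up to the constant $\vol(N,g_N)$) the degree on fibers, which is a homotopy invariant equal to that of $u$. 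Equality in the resulting sharp inequality — via Hölder and the algebraic inequality between $|dw|^{n+l}$ and $|\Lambda^l dw|\cdot|\text{(vertical part)}|^{n}$ — forces $|dw|$ to be constant, the horizontal distribution of $w$ to be orthogonal to $\ker dw$, and $dw$ to be conformal onto $T S^l$ with conformal factor matching $u$; one then reconstructs $w$ as a Riemannian submersion onto $S^l$ with the same energy as $u$, and finally identifies it with $u$ up to an element of $\mathrm{Isom}^+(S^l)$, i.e. a rotation, using that two energy-minimizing harmonic-type maps to $S^l$ in the same homotopy class which are both submersions with totally geodesic fibers must agree up to isometry. I expect the main obstacle to be making this rigidity argument fully rigorous at the level of $W^{1,n+l}$ minimizers rather than smooth maps: one must control the a priori possibly nonempty singular set $\{dw=0\}$ and justify the coarea/degree identities there, presumably by using \cref{regularity} to reduce to $C^{1,\alpha}$ maps and then arguing that the singular set has measure zero (indeed $(n+l)$-capacity zero) so that the sharp integral inequalities and the equality discussion go through unchanged.

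Alternatively, and perhaps more cleanly, one can bypass the full calibration by noting that the $l$-homotopy class of the projection is detected by a single cohomological quantity — pairing $w^*[\mathrm{vol}_{S^l}]$ with the $[N]$-classes — and that the sharp lower bound for $E_{n+l}$ in terms of this quantity is attained exactly by the product-type maps, which are unique modulo the isometry group of $S^l$; this is in the same spirit as \cref{uniqueness} but with $S^l$ in place of $S^1$, the extra ingredient being that here the target still carries a transitive isometry group acting by isometries, so the "rotation ambiguity" is all of $SO(l+1)$. Either way, once uniqueness up to rotation is established, the proposition follows at once: the projection is a minimizer, every minimizer equals it up to a rotation, and hence it is \emph{the} minimizer in the stated sense.
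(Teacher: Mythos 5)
Your overall strategy is the same as the paper's (a sharp lower bound for the $(n+l)$-energy tied to the fiberwise degree, plus an equality/rigidity analysis), but as written the crucial steps are asserted rather than proven, and the two places where you lean on outside principles do not hold up. First, the calibration step is not well formed: $w^*\mathrm{vol}_{S^l}$ is an $l$-form on the $(n+l)$-dimensional manifold, so "integrating the pulled-back volume form over $M$" is not meaningful; one must either integrate it over the fibres $\{x\}\times S^l$ or pair it with $\pi_N^*\mathrm{vol}_N$. More importantly, the statement that every competitor in the $l$-homotopy class of the projection has fibrewise degree $1$ is not automatic; the paper derives it from the identification $(N\times S^l)^{(l)}=N^{(l)}\vee S^l$, on which the projection collapses the first summand, and connectedness of $N$. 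Second, the sharp inequality itself and its equality case are only gestured at: the paper's chain is to discard the $N$-derivatives, apply H\"older on each fibre to pass to the conformally invariant fibrewise $l$-energy, and use $\int_{S^l}|\nabla^{S^l}v|^l\,d\theta\geq (l+1)\omega_{l+1}l^{l/2}|\deg v(x,\cdot)|$ with equality iff $v(x,\cdot)$ is conformal. Your list of equality conditions overclaims ("conformal factor matching $u$"); what equality actually gives is independence of $x$, constant density, and fibrewise conformality, and the value of the constant still has to be pinned down (the paper does this via $|\nabla\tilde v|^2\geq\lambda_1(S^l)=l$, comparing with the achieved energy).

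The most serious gap is the final identification. You invoke the principle that "two energy-minimizing harmonic-type maps to $S^l$ in the same homotopy class which are both submersions with totally geodesic fibres must agree up to isometry"; this is not a theorem you can cite and is essentially the uniqueness statement you are trying to prove. Likewise, your alternative route appeals to an $S^l$-analogue of \cref{uniqueness}, which is exactly what is unavailable for $l\geq 2$ — its absence is the reason \cref{prop_prod_est} is needed at all. The correct concluding step is elementary once the equality analysis above is in place: a minimizer must be of the form $v(x,\theta)=\tilde v(\theta)$ with $\tilde v$ a degree-one harmonic self-map of $S^l$ of constant density $l$, so its components are first eigenfunctions, i.e.\ restrictions of linear functions, and $\tilde v$ is therefore the restriction of an orthogonal transformation, giving $v=A\circ u$ with $A\in O(l+1)$. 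Your worry about the singular set $\{dw=0\}$ is, by contrast, a non-issue: the lower bound is established for all competitors directly (Lipschitz, or continuous $W^{1,n+l}$ maps), so no a priori smoothness of the minimizer is needed for the equality discussion.
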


\begin{proof}
Let $v \colon N \times S^\ell \to S^\ell$ be a Lipschitz map whose restriction to the $l$-skeleton of $N \times S^\ell$ is homotopic to the restriction of the
projection $N \times S^\ell \to S^\ell$ to the $\ell$-skeleton.
We want to estimate
\begin{equation}
 \int_{N \times S^\ell} |dv|_g^{n+\ell} dV_g
\end{equation}
from below.

We have
\begin{equation} \label{prod_est}
\begin{split}
 \int_{N \times S^\ell} &|dv|_g^{n+\ell} dV_g  
 \\
 	& = \int_N \int_{S^\ell} (|\nabla^N v|^2 + |\nabla^{S^\ell} v|^2)^{(n+\ell)/2} (x, \theta) d\theta dx
 	\\ 
 	& \geq \int_N \int_{S^\ell} |\nabla^{S^\ell}v|^{n+\ell}(x, \theta)  d\theta dx 
 	\\
 	& \geq (\ell+1) \omega_{\ell+1})^{-n/\ell} \int_N \left( \int_{S^\ell} |\nabla^{S^\ell}v|^\ell(x , \theta) d \theta \right)^{(n+\ell)/\ell} dx,
 \end{split}
\end{equation}
where we have used H{\"o}lder's inequality in the last step.
Equality holds in the above inequalities if and only if $|\nabla^M v|^2=0$ and $|\nabla^{S^\ell}v|^2= const.$

In order to estimate the remaining integral in the last line of \eqref{prod_est} we use that the maps $v(x, \cdot) \colon S^\ell \to S^\ell$ have degree $1.$
This can be seen by inspecting the $l$-homotopy type of $v$:
If we endow $S^\ell$ with the CW-structure consisting of a single $0$- and a single $\ell$-cell,
we have
$( N \times S^\ell)^{(\ell)} = N^{(\ell)} \times \{\theta_0\} \cup \{x_0\} \times S^\ell=N^{(\ell)} \vee S^\ell$
with $\theta_0 \in S^\ell$ and $x_0 \in N$ corresponding to the $0$-cells.
The projection onto $S^\ell$ restricts to the map $N^{(\ell)} \vee S^\ell \to S^\ell$ that collapses the first summand and is the identity on $S^\ell.$
In particular, we find that for any $v,$ such that $v^{(\ell)}$ is homotopic to the map described above, the degree of $v(x_0,\cdot) \colon S^\ell \to S^\ell$ equals $1.$
Since $N$ is connected, $v(x,\cdot) \simeq v(x_0,\cdot)$  for any $x,$ thus
$\deg v(x,\cdot)=1$ for any $x \in N.$

This implies, that
\begin{equation} \label{deg_est}
 \int_{S^\ell} |\nabla^{S^\ell}v|^l(x , \theta) d \theta \geq (\ell+1)\omega_{\ell+1}\ell^{\ell/2} |\deg v(x, \cdot) |
 = (\ell+1)\omega_{\ell+1}l^{\ell/2}.
\end{equation}
Here, equality holds if and only if $v(x, \cdot)$ is conformal.
Combining \eqref{prod_est} and \eqref{deg_est}, we find
\begin{equation}  \label{fin_est}
 \int_{N \times S^\ell} |dv|_g^{n+\ell} dV_g  
 	\geq \vol(N)(\ell+1)\omega_{\ell+1}l^{(n+\ell)/2},
\end{equation}
with equality if and only if $|\nabla^M v|^2=0,$ and $|\nabla^{S^\ell}v|^2= const.,$ and $v(x, \cdot)$ is conformal.
It follows in this case that $v(x,\theta)= \tilde v (\theta)$
with $\tilde v \colon S^\ell \to S^\ell$ of degree $1.$
Observe, that $u \colon M \times S^\ell\to S^\ell$ realizes the equality in \eqref{fin_est}.
Therefore,
\begin{equation}
\inf_v \int_{N \times S^\ell} |dv|^{n+\ell} dV_g = \vol(N)(\ell+1)\omega_{\ell+1}l^{(n+\ell)/2},
\end{equation}
where the infimum is taken over all Lipschitz maps $v$ having the $l$-homotopy of $u.$
In particular, by the equality discussion above, minimizers need to be $(n+l)$-harmonic maps $v(x,\theta)= \tilde v (\theta)$,
with $|\nabla v|^2=const.$
Therefore, $\tilde v$ defines a harmonic selfmap of $S^\ell$ with constant density.
Since $\tilde v$ is non-trivial, it follows that $|\nabla \tilde v|^2 \geq \lambda_1(S^\ell)=l.$ 
Consequently, equality in \eqref{fin_est} is only achieved by maps of the form $A \circ u,$ with $A \in O(l+1).$
\end{proof}

Using \cref{prop_prod_est} instead of \cref{uniqueness}, assertion $(ii)$ of \cref{main}
follows along the same lines as assertion $(i.)$

\begin{proof}[Proof of \cref{mappingtori}]
Let $f \colon M \to S^1$ be a submersion.
Since $M$ is compact this is a proper submersion.
Moreover, $f$ has to be surjective, since otherwise $M$ would be contractible.
It follows by Ehresmann's lemma that $f \colon M \to S^1$ is a fibre bundle,
$F \to M \to S^1,$
with $F$ a smooth $(n-1)$-dimensional manifold.
As a consequence there is a diffeomorphism $\phi \colon F \to F,$
such that $M$ is obtained as the mapping torus corresponding to $\phi,$
i.e.\
\begin{align*}
M \cong \left( F \times [0,1] \right) / (x,0) \sim (\phi(x),1).
\end{align*}
Choose a metric $g_0$ on $F,$ which is invariant under $\phi.$
We claim that the metric $g_1=g_0 + dt^2$ defined on $F \times [0,1]$ descends to a smooth
metric $g$ on $M.$ 
Clearly, $g_1$ descends to a metric $g$ on $M,$ we only need to check that it is smooth.
This is clear near all points $(x,t)$ with $t \neq 0,1.$
We have coordinates with values in $F \times (- \eps, \eps)$ near the $t=0$-slice as 
follows.
\begin{align} \label{coordinates}
(x,t) \mapsto \begin{cases} (x,t-1) & \text{if}\ \ 	t \leq 1
							\\
							(\phi(x),t) & \text{if}\ \ 	t >0.
			\end{cases}
\end{align}
In these coordinates $g$ is given by $g_0 + dt^2,$ since $g_0$ is $\phi$-invariant.

It remains to show that $(M,g)$ admits an eigenmap.
Define $u \colon F \times [0,1] / (x,0) \sim (f(x),1) \to S^1 $ by $(x,t) \mapsto t.$
With respect to $g$ this is a Riemannian submersion.
Moreover, it follows from \eqref{coordinates} that $u$ has totally geodesic fibres.
Thus $u$ is an eigenmap.
\end{proof}

\end{document}